\documentclass[10pt,a4paper]{article}
\usepackage{fullpage}
\usepackage{amsfonts,amsmath,amssymb}
\usepackage{amsthm}
\usepackage{graphicx}\usepackage{sectsty}

\theoremstyle{plain}
\newtheorem{theorem}{Theorem}[section]
\newtheorem{lemma}[theorem]{Lemma}

\theoremstyle{definition}
\newtheorem{definition}[theorem]{Definition}

\theoremstyle{remark}

\numberwithin{equation}{section}

\sectionfont{\large}
\newenvironment{acknowledgement}[1][Acknowledgement
]{\begin{trivlist} \item[\hskip \labelsep {\bfseries
#1}]}{\end{trivlist}}

\begin{document}
\title{An Inverse Uniqueness of a Phaseless Scattering Problem \\by Zero-Crossings}
\author{Lung-Hui Chen$^1$}\maketitle\footnotetext[1]{Department of
Mathematics, National Chung Cheng University, 168 University Rd.
Min-Hsiung, Chia-Yi County 621, Taiwan. Email:
mr.lunghuichen@gmail.com. Fax:
886-5-2720497.}

\begin{abstract}
We discuss the inverse uniqueness problem in phaseless scattering by counting the zeros of its modulus of the scattering amplitude. The phase linearization of scattered wave field disturbs the originally uniform distribution of the zero set. There is a connection between the perturbation of the index of refraction and the zero distribution of the modulus.  We conclude the inverse uniqueness of the phaseless problem from  the point of view of interior transmission problem.
\\MSC: 34B24/35P25/35R30.
\\Keywords: inverse problem/Cartwright-Levinson theorem/phaseless problem/zero-crossing/interior transmission problem.
\end{abstract}
\section{Introduction}
In this paper, we study the inverse spectral uniqueness on the following scattering problem defined by the perturbation inside $\Omega:=\{x\in\mathbb{R}^{3}|\,|x|<R,\,R>0\}$.
\begin{eqnarray}\label{1.1}
\left\{%
\begin{array}{ll}
\Delta u(x)+k^2n(x)u(x)=0,\,x\in\mathbb{R}^3,\,\Im k\leq0;\vspace{5pt}\\\vspace{3pt}
u(x)=u^i(x)+u^s(x),\,x\in\mathbb{R}^3\setminus \Omega; \\
\lim_{|x|\rightarrow\infty}|x|\{\frac{\partial u^s(x)}{\partial |x|}-iku^s(x)\}=0.
\end{array}%
\right.
\end{eqnarray} 
Here, $u^{i}(x)=e^{-ikx\cdot\nu}$, and $u^s(x)=u^s(x,k,\nu)$ is the scattered wave field that depends on the frequency variable $k$ and the impinging direction $\nu\in\mathbb{S}^{2}$. In this paper, we follow the wave propagation theory established in M. V. Klibanov and V. G. Romanov \cite{Klibanov2}. In particular, we assume that 
\begin{eqnarray}
&&n(x)=1+\beta(x)\in C^{15}(\mathbb{R}^{3};\mathbb{R});\\
&&\beta(x)\geq0,\,\beta(x)=0,\,\mbox{ for }x\in\mathbb{R}^{3}\setminus \Omega.
\end{eqnarray}
It is shown in \cite{Klibanov2} that the problem~(\ref{1.1}) has a solution $u(x)$ in H\"{o}lder space $C^{16+\alpha}(\mathbb{R}^{3})$. The phaseless inverse scattering problem (PISP) is to find the index of refraction $n(x)$ if the  information is given or partially given in the following scattering data.
\begin{equation}\label{1.4}
 f(x,\nu,k):=|u^{s}(x,k,\nu)|^{2},\,(x,\nu,k)\in\mathbb{R}^{3}\times\mathbb{S}^{2}\times\mathbb{C}.
\end{equation}
The problem arises from the inverse scattering theory in many settings, e.g., electron microscopy, crystallography, medical imaging, and nano-optics. The historic review and the details can be found in \cite{Klibanov,Klibanov2,Klibanov3}.  We apply the zero distribution theory in complex variable theory \cite{Boas,Cartwright2,Koosis,Levin,Levin2} to study the uniqueness theorem on the index of refraction $n(x)$. The zero distribution of function $ f(x,\nu,k)$ is one kind of phaseless information on the perturbation. 
For every $\nu\in\mathbb{S}^{2}$, we denote $$S^{\pm}(\nu):=\{x\in\partial\Omega|\,x\cdot\nu\gtrless0 \}.$$
We state the following inverse uniqueness result.
\begin{theorem}\label{11}
Let $f^{j}(x,\nu,k)$ be the square modulus of the complex-valued scattered wave field generated by the index of refraction $n^{j}$, $j=1,2$. If $f^1(x,\nu,k)\equiv f^2(x,\nu,k)$ with the fixed pair $\pm\nu$ in $\mathbb{S}^{2}$ for all $x\in\mathbb{S}^{\pm}(\nu)$ in a neighborhood $U$ in $\Im k\leq0$, then $n^{1}\equiv n^{2}$.
\end{theorem}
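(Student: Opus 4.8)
The plan is to exploit the analytic dependence of the scattered field on the frequency $k$ to turn the hypothesis on $U$ into a global identity, to convert that identity of moduli into an identity of the scattered fields themselves, and finally to read off the Cauchy data of the total field on $\partial\Omega$ and contradict the discreteness of the interior transmission eigenvalues.

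First I would fix $x\in\partial\Omega$ and study $k\mapsto u^{s,j}(x,k,\nu)$. By the Lippmann--Schwinger representation $u^{s,j}(x,k,\nu)=-k^{2}\int_{\Omega}\tfrac{e^{ik|x-y|}}{4\pi|x-y|}\,\beta^{j}(y)\,u^{j}(y,k,\nu)\,dy$ and the estimates of \cite{Klibanov2}, this is holomorphic in $\Im k<0$, continuous up to $\mathbb{R}$, and of finite exponential type, the type being a geometric quantity attached to $(\Omega,x,\nu)$; this is where the regularity $n\in C^{15}$ enters. Reality of $n^{j}$ gives $\overline{u^{s,j}(x,k,\nu)}=u^{s,j}(x,-\bar k,\nu)$, so on $U$ the datum is literally $f^{j}=|u^{s,j}|^{2}$ and the hypothesis forces $\log|u^{s,1}(x,\cdot,\nu)|\equiv\log|u^{s,2}(x,\cdot,\nu)|$ on $U$. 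Both sides being harmonic off the (discrete) zero sets, the identity principle for harmonic functions propagates this to all of $\Im k\le0$; equivalently, realizing $f^{j}$ on $\mathbb{R}$ as the trace of the finite-type function $k\mapsto u^{s,j}(x,k,\nu)\,u^{s,j}(x,-k,\nu)$ and invoking the Cartwright--Levinson theory \cite{Cartwright2,Levin}, one matches the two zero distributions---the ``zero-crossings'' of the modulus. In particular $u^{s,1}(x,\cdot,\nu)$ and $u^{s,2}(x,\cdot,\nu)$ have the same zeros in $\Im k\le0$, with multiplicity.

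It follows that $h(k):=u^{s,1}(x,k,\nu)/u^{s,2}(x,k,\nu)$ is holomorphic and zero-free in $\Im k<0$, of finite exponential type there, with $|h|\equiv1$ on $\mathbb{R}$; a Phragm\'en--Lindel\"of / Schwarz-reflection argument then forces $h(k)=e^{i\alpha(x,\nu)}e^{i\beta(x,\nu)k}$ with $\alpha,\beta\in\mathbb{R}$. Comparing the exponential types of $u^{s,1}(x,\cdot,\nu)$ and $u^{s,2}(x,\cdot,\nu)$ in the lower half-plane kills $\beta(x,\nu)$, and the low-frequency expansion $u^{s,j}(x,k,\nu)=-\tfrac{k^{2}}{4\pi}\bigl(\int_{\Omega}\tfrac{\beta^{j}(y)}{|x-y|}\,dy\bigr)\bigl(1+O(k)\bigr)$, whose leading coefficient is \emph{strictly positive} because $\beta^{j}\ge0$ is not identically zero, forces $e^{i\alpha(x,\nu)}>0$, hence $\alpha(x,\nu)=0$. (The degenerate case $\beta^{1}\equiv0$ is disposed of directly: then $f^{1}\equiv0$, so $u^{s,2}\equiv0$ on $\partial\Omega$ and $\beta^{2}\equiv0$.) Thus $u^{s,1}(x,k,\nu)\equiv u^{s,2}(x,k,\nu)$ for $x\in S^{+}(\nu)\cup S^{-}(\nu)$, and by continuity for all $x\in\partial\Omega$. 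I expect this stage---verifying the requisite complex-analytic class (whence the smoothness hypothesis) and removing the phase (whence the need for the pair $\pm\nu$ and for the positivity of $\beta$)---to be the main obstacle.

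Finally, $u^{s,1}(\cdot,k,\nu)$ and $u^{s,2}(\cdot,k,\nu)$ are radiating solutions of $\Delta\cdot+k^{2}\cdot=0$ in $\mathbb{R}^{3}\setminus\Omega$ with identical Dirichlet data on $\partial\Omega$, so by uniqueness for the exterior problem they agree throughout $\mathbb{R}^{3}\setminus\Omega$, hence their normal derivatives agree on $\partial\Omega$ as well; the total fields therefore have identical Cauchy data on $\partial\Omega$ for every $k\in U$. Setting $w:=u^{1}(\cdot,k,\nu)|_{\Omega}$ and $v:=u^{2}(\cdot,k,\nu)|_{\Omega}$, we obtain $\Delta w+k^{2}n^{1}w=0$, $\Delta v+k^{2}n^{2}v=0$ in $\Omega$ with $w=v$, $\partial_{\nu}w=\partial_{\nu}v$ on $\partial\Omega$, and $w\not\equiv0$ (otherwise $u^{s,1}(\cdot,k,\nu)=-u^{i}$ in $\mathbb{R}^{3}\setminus\Omega$, impossible since $-u^{i}$ is not radiating). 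Hence every $k$ in the open set $U$ is an interior transmission eigenvalue for the pair $(n^{1},n^{2})$; since, by the theory of the interior transmission problem, that set is discrete whenever $n^{1}\not\equiv n^{2}$ (after the standard unique-continuation reduction handling the subdomain where $n^{1}\equiv n^{2}$), we conclude $n^{1}\equiv n^{2}$.
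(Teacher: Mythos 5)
Your proposal is correct in outline and follows the same skeleton as the paper -- propagate the modulus identity from $U$ to all of $\Im k\le0$, conclude that the two scattered fields share zeros so their ratio is a zero-free function of unit modulus, remove the resulting constant phase, obtain $[u^1]^s=[u^2]^s$ on $\partial\Omega$ and hence equal Cauchy data for the total fields, and finish by pitting the continuum of frequencies against the discreteness of interior transmission eigenvalues. Where you genuinely diverge is the middle step. The paper removes the phase through its zero-crossing machinery: Dickson's theorem on exponential sums and the Klibanov--Romanov high-energy expansion~(\ref{2.2}) give the zero density $(\varphi(x,\nu)-x\cdot\nu)/\pi$, from which equality of densities yields $\varphi^1=\varphi^2$, then $A^1=A^2>0$ and $\gamma=0$; this ties the uniqueness to the spectral invariant $\varphi-x\cdot\nu$, which is the point of the paper. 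You instead bypass the zero counting entirely: Schwarz reflection across $\mathbb{R}$ plus Hadamard factorization to get $h=e^{i\alpha}e^{i\beta k}$, type comparison to kill $\beta$, and the low-frequency Born expansion with $\beta^j\ge0$ to force $e^{i\alpha}=1$. That is a legitimate and in some ways softer route (and your endgame is more careful than the paper's, noting $w\not\equiv0$ and the unique-continuation reduction), but note two things: (i) once you have $|u^{s,1}|\equiv|u^{s,2}|$ on the whole closed lower half-plane, $|h|\equiv1$ on an open set and $h$ is constant by the open mapping theorem, so the Phragm\'en--Lindel\"of/type-comparison detour is unnecessary -- this is exactly the paper's ``purely imaginary logarithm'' step; (ii) your reflection argument leans on $u^{s,j}(x,\cdot,\nu)$ being of finite exponential type in $\Im k\le0$ and analytic near $k=0$, which you should justify (the paper's Lemma~\ref{S} and Lemma~\ref{22} supply precisely this, via pole location and the time-domain exponential decay of $\hat v$), and the sign in your Lippmann--Schwinger leading term should be $+k^2\int_\Omega\frac{\beta^j(y)}{4\pi|x-y|}dy$, though only the common sign of the two leading coefficients matters for concluding $\alpha=0$.
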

The zero distribution theory in integral function theory \cite{Boas,Cartwright2,Koosis,Levin,Levin2} specifies the maximal zero-crossing density for the scattering data function $ f(x,\nu,k)$. Whenever the zero crossing density exceeds its theoretical maximal quantity, the function must be identically zero. This renders an inverse uniqueness on the scatterer, as we have discussed in \cite{Chen,Chen6,Chen7}. The assumption on the neighborhood $U$ in Theorem \ref{11} can be replaced by the assumptions on the numerical quantity of the zero-crossing density of $f(x,\nu,k)$.
\section{Lemmas}
There are two components in this paper: the Cartwright-Levinson type of theorems in entire function theory \cite{Boas,Cartwright2,Koosis,Levin,Levin2} and the phase-linearization constructed in \cite{Klibanov2}. In particular, we have the following asymptotic expansion under the assumptions mentioned in Introduction.
\begin{eqnarray}\label{2.1}
u(x,k,\nu)=A(x,\nu)\exp\{-ik\varphi(x,\nu)\}+\int_{\varphi(x,\nu)}^{\infty}\hat{v}(x,t,\nu)e^{-ikt}dt,\,\Im k=0.
\end{eqnarray}
Accordingly, the following high-energy expansion holds.
\begin{eqnarray}\label{2.2}
&&u^{s}(x,k,\nu)=A(x,\nu)\exp\{ik\varphi(x,\nu)\}-\exp\{ikx\cdot\nu\}+O(\frac{1}{k}),\,\Im k=0,\,\forall\nu\in
\mathbb{S}^{2},
\end{eqnarray}
which holds for $x$ in an arbitrary bounded domain, and
\begin{equation}\nonumber
A(x,\nu)=\exp\{-\frac{1}{2}\int_{{\Gamma(x,\nu)}}n^{-2}(\xi)\Delta_{\xi}\varphi(\xi,\nu)d\tau\}>0,
\end{equation}
where $\varphi(x,\nu)$ is the solution of the following problem
\begin{eqnarray}
&&|\nabla_{x}\varphi(x,\nu)^{2}|=n^{2}(x);\label{2.33}\\
&&\varphi(x,\nu)=x\cdot\nu,\,\forall x\cdot\nu\leq -B.\label{2.44}
\end{eqnarray}
Here we define
\begin{equation}
A(k):=A(x,\nu)\exp\{ik\varphi(x,\nu)\}-\exp\{ikx\cdot\nu\},\label{22.2}
\end{equation}
which would contribute to the zero-crossings of $f(x,\nu,k)$.
For far-field behavior, we have
\begin{equation}\label{233}
u^s(x,k,\nu)=\frac{e^{ik|x|}}{|x|}u_\infty(\hat{x};\nu,k)+O(\frac{1}{|x|^{\frac{3}{2}}}),
\end{equation}
which holds uniformly for all $\hat{x}:=\frac{x}{|x|}$, $x\in\mathbb{R}^3$, and $u_\infty(\hat{x};\nu,k)$ is known as the scattering amplitude in the literature \cite{Colton2,Isakov,Lax,Melrose}. In this paper, we adopt the convention that $u_\infty(\hat{x};\nu,k)$ is defined analytically in $\Im k\leq 0$, and extends meromorphically from $\Im k\leq0$ to $\mathbb{C}$.
\begin{lemma}\label{S}
The scattered wave field $u^{s}(x,k,\nu)$ in~(\ref{233}) is defined meromorphically in $\mathbb{C}$ with poles in $\Im k >0$ except a finite number of purely imaginary $k$'s  that  $k^{2}$ are the negative eigenvalues of~(\ref{1.1}).  In particular, the poles of $u_\infty(\hat{x};\nu,k)$ are located as the mirror images of its zeros  to the real axis. 
\end{lemma}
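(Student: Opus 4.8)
\textit{Proof idea.} The argument I have in mind proceeds in three stages: meromorphic continuation by Fredholm theory, localization of the poles inside the physical region $\Im k\le 0$, and the pole--zero mirror symmetry forced by the reality of $n$. For the first stage, I would recast (\ref{1.1}) as the Lippmann--Schwinger equation
\begin{equation}\nonumber
u(x,k,\nu)=e^{-ikx\cdot\nu}-k^{2}\int_{\Omega}\frac{e^{ik|x-y|}}{4\pi|x-y|}\,\beta(y)\,u(y,k,\nu)\,dy ,
\end{equation}
equivalently $(I+k^{2}\mathcal{K}_{k})\,u|_{\overline{\Omega}}=u^{i}|_{\overline{\Omega}}$ with $\mathcal{K}_{k}$ the weakly singular integral operator on $C(\overline{\Omega})$ (or $L^{2}(\Omega)$) with kernel $\frac{e^{ik|x-y|}}{4\pi|x-y|}\beta(y)$. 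Because this kernel is continuous off the diagonal and \emph{entire} in $k$, the map $k\mapsto k^{2}\mathcal{K}_{k}$ is a holomorphic family of compact operators on all of $\mathbb{C}$, so by the analytic Fredholm theorem $(I+k^{2}\mathcal{K}_{k})^{-1}$ exists and is meromorphic on $\mathbb{C}$ with a discrete, locally finite pole set. Inserting $u|_{\overline{\Omega}}$ back into the representation formula and passing to the far-field limit (\ref{233}) extends $u^{s}(x,k,\nu)$ (for $x$ in any bounded set) and $u_{\infty}(\hat{x};\nu,k)$ meromorphically to $\mathbb{C}$, all sharing the same poles.

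Second, I would control the poles in $\Im k\le 0$ by uniqueness of the forward problem. If $I+k^{2}\mathcal{K}_{k}$ were not injective at some such $k$, the corresponding $w\not\equiv 0$ extends off $\Omega$ by the same convolution to a radiating solution of $\Delta w+k^{2}n w=0$ in $\mathbb{R}^{3}$; the classical Rellich/Green's-identity argument (apply Green's identity on $\{|x|<\rho\}$, insert the radiation condition, take imaginary parts) forces $w\equiv 0$ by unique continuation, unless $k$ is purely imaginary with $k^{2}<0$, i.e. $k^{2}$ is a negative eigenvalue of (\ref{1.1}). Such eigenvalues form a finite set because $\beta$ is smooth and compactly supported (indeed the set is empty since $\beta\ge 0$, though nothing below uses that), cf. \cite{Klibanov2}. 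Consequently every pole of $u_{\infty}$ other than these finitely many purely imaginary points lies in $\Im k>0$.

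Third, since $n$, $\beta$ and $x\cdot\nu$ are real, $\overline{u(x,k,\nu)}$ solves the \emph{same} problem at frequency $-\bar k$; for real $k$ this gives, by uniqueness, $\overline{u_{\infty}(\hat{x};\nu,k)}=u_{\infty}(\hat{x};\nu,-\bar k)$, and analytic continuation extends the identity to all of $\mathbb{C}$, so that the Schwarz reflection $k\mapsto\overline{u_{\infty}(\hat{x};\nu,\bar k)}$ coincides with $k\mapsto u_{\infty}(\hat{x};\nu,-k)$. Combining this reality relation with the unitarity (optical-theorem) identity for the scattering amplitude on the real $k$-axis and continuing the resulting identity meromorphically, one should obtain that $u_{\infty}(\hat{x};\nu,\cdot)$ has a pole at $k_{0}$ exactly when it has a zero at $\bar k_{0}$; together with the second stage this is precisely the claim that the poles of $u_{\infty}$ are the mirror images of its zeros across the real axis.

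Stages one and two are essentially routine (analytic Fredholm theory and the Rellich uniqueness argument). The crux --- and the step I expect to cost the most work --- is the third: one must extract from unitarity together with the reality relation an identity that genuinely \emph{trades} the zeros of $u_{\infty}(\hat{x};\nu,\cdot)$ for its poles under $k\mapsto\bar k$, rather than one that merely reflects each of the two sets into itself, and one must check the consistency of this picture at the finitely many exceptional purely imaginary frequencies; this is where the scattering-theoretic results of \cite{Colton2,Isakov,Lax,Melrose} come in.
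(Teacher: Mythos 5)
The paper gives no argument of its own for this lemma: its entire proof is the sentence ``This is well-known in scattering theory'' together with a citation to \cite{Lax,Melrose}. Your first two stages are a correct and fairly standard fleshing-out of what those references supply: the Lippmann--Schwinger kernel restricted to $\overline{\Omega}$ is entire in $k$, so analytic Fredholm theory yields the meromorphic continuation of $u^{s}$ and $u_{\infty}$ to $\mathbb{C}$ with a discrete pole set, and the Rellich/unique-continuation argument confines the non-resonance poles to purely imaginary $k$ with $k^{2}$ a negative eigenvalue (a set which, as you note, is actually empty here because $\beta\geq0$). Up to the paper's somewhat self-contradictory sign conventions about which half-plane is ``physical'', this reproduces the theory the paper is invoking.

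The genuine gap is exactly where you flagged it, and the route you sketch will not close it. Reality plus unitarity, continued off the real axis, give the Lax--Phillips functional equation $S(k)^{-1}=S(\bar{k})^{*}$ as an identity between operators on $L^{2}(\mathbb{S}^{2})$: it trades poles of the scattering \emph{operator} at $k_{0}$ for points $\bar{k}_{0}$ at which $S$ fails to be invertible. It does not trade poles against zeros of the scalar function $k\mapsto u_{\infty}(\hat{x};\nu,k)$ at a fixed pair $(\hat{x},\nu)$, which is what the ``in particular'' clause asserts and what the rest of the paper actually uses (it is the zero set of this fixed-direction amplitude, equivalently of $u^{s}(x,k,\nu)$, that is counted in Lemma \ref{266} and Lemma \ref{26}). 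In three dimensions there is no scalar functional equation analogous to the one-dimensional relations for the coefficients $a(k),b(k)$, and the zeros of $u_{\infty}(\hat{x};\nu,\cdot)$ move with $(\hat{x},\nu)$ while the resonances do not, so unitarity combined with Schwarz reflection cannot by itself produce a bijective mirror correspondence between the two sets; your phrase ``one should obtain'' is precisely the unproved step. To get a complete proof you would have to either reinterpret ``zeros'' as the points where $S(\bar{k})$ is not invertible (the operator-level statement that \cite{Lax,Melrose} actually prove) or supply a separate, direction-dependent argument. Note that the paper supplies neither, so this defect is inherited from the statement itself rather than introduced by your proposal; but as a proof of the lemma as written, your third stage remains open.
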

\begin{proof}
This is well-known in scattering theory.  Let us refer to \cite{Lax,Melrose}.
\end{proof}
The error term in~(\ref{2.2}) comes from the decaying rate of $\int_{\varphi(x,\nu)}^{\infty}\hat{\nu}(x,t,\nu)e^{-ikt}dt$. Taking advantage of the decaying rate, we prove the following lemma.
\begin{lemma}\label{22}
The expansion~(\ref{2.2}) holds in $-\infty<\Im k<C$ for some $C>0$ in 
$\mathbb{C}$.
\end{lemma}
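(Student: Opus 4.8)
The plan is to identify the remainder in (\ref{2.2}) with the Fourier--Laplace integral already present in (\ref{2.1}) and then to read off its holomorphy domain from the rate of decay of $\hat v$ in the transform variable. Subtracting the incident field from (\ref{2.1}) reproduces, on the line $\Im k=0$, the geometric-optics leading term of (\ref{2.2}) and exhibits the remainder as
\[
\mathcal R(x,k,\nu):=\int_{\varphi(x,\nu)}^{\infty}\hat v(x,t,\nu)\,e^{-ikt}\,dt ,
\]
so the whole question is how far into the complex $k$-plane this integral extends. The input I would take from the construction in \cite{Klibanov2} is that, locally uniformly in $x$ (and in $\nu$), the map $t\mapsto\hat v(x,t,\nu)$ is $C^{1}$ on $[\varphi(x,\nu),\infty)$ and decays exponentially together with its $t$-derivative: there exist $c_{0}>0$ and $\gamma>0$ with $|\hat v(x,t,\nu)|+|\partial_{t}\hat v(x,t,\nu)|\le c_{0}e^{-\gamma t}$. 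Note that $\hat v$ cannot have compact $t$-support, for otherwise $\mathcal R$, and hence $u^{s}$, would be entire in $k$, in tension with the resonance structure of Lemma \ref{S}; it is precisely the one-sided decay at $t=+\infty$ that produces a one-sided strip rather than the whole plane.

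With $C:=\gamma$ and $\Sigma:=\{k:\Im k<C\}$ the bound $|\hat v(x,t,\nu)e^{-ikt}|\le c_{0}e^{(\Im k-\gamma)t}$ gives absolute and locally uniform convergence of $\mathcal R(x,\cdot,\nu)$ on $\Sigma$, so by Morera's theorem $\mathcal R(x,\cdot,\nu)$ is holomorphic in $\Sigma$. Since $A(x,\nu)e^{ik\varphi(x,\nu)}-e^{ikx\cdot\nu}$ is entire and, by Lemma \ref{S}, $u^{s}(x,\cdot,\nu)$ is meromorphic on $\mathbb C$ with all poles in $\Im k>0$ save for finitely many purely imaginary exceptional points, the identity (\ref{2.2}), valid on $\Im k=0$, extends by the identity theorem to $\Sigma$ minus those finitely many points (or, after shrinking $C$ to the height of the lowest one when it is positive, to all of $\Sigma$); in particular $u^{s}(x,\cdot,\nu)$ is holomorphic throughout $\Sigma$.

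For the order of the remainder one integrates by parts once,
\[
\mathcal R(x,k,\nu)=\frac{\hat v(x,\varphi(x,\nu),\nu)}{ik}\,e^{-ik\varphi(x,\nu)}+\frac{1}{ik}\int_{\varphi(x,\nu)}^{\infty}\partial_{t}\hat v(x,t,\nu)\,e^{-ikt}\,dt ,
\]
the boundary term at $t=+\infty$ vanishing by the decay; for $x$ in a fixed bounded domain and $|\Im k|$ bounded, both terms are $O(1/|k|)$ as $|k|\to\infty$, which is the remainder bound in (\ref{2.2}) now propagated into the strip. \emph{The main obstacle is the second step}: extracting from the Klibanov--Romanov construction the exponential decay of $\hat v$ and of $\partial_{t}\hat v$ with a definite rate $\gamma$, since this rate is exactly what fixes the admissible $C$; once it is in hand, the remaining steps --- absolute convergence, holomorphy, analytic continuation against Lemma \ref{S}, and one integration by parts --- are routine.
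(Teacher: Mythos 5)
Your proposal is correct and follows essentially the same route as the paper: both identify the remainder with the Fourier--Laplace integral of $\hat v$, take the exponential decay of $\hat v$ (and of $\hat v_t$) from the Klibanov--Romanov construction as the key input fixing the strip $\Im k<C$, and obtain the $O(1/k)$ bound by one integration by parts. The only cosmetic difference is that you argue holomorphy via Morera and the identity theorem where the paper invokes the Riemann--Lebesgue lemma and cites Vainberg for the behavior of $\hat v_t$, which supplies exactly the decay input you flagged as the main obstacle.
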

\begin{proof}
We recall Theorem 1 provided in \cite{Klibanov2}, which says the solution $v$ of the system
\begin{eqnarray*}
\left\{%
\begin{array}{ll}
n^{2}(x)v_{tt}-\Delta v=0,\,(x,t,\nu)\in\mathbb{R}^{3}\times\mathbb{R}\times\mathbb{S}^{2};\vspace{4pt}\\\vspace{4pt}
v(x,t,\nu)=\delta(t-x\cdot \nu)+\overline{v}(x,t,\nu); \\
\overline{v}(x,t,\nu)\equiv0,\,t<-B,
\end{array}%
\right.
\end{eqnarray*} 
can be represented in the form
\begin{equation}\label{2.3}
v(x,t,\nu)=A(x,y)\delta(t-\varphi(x,\nu))+\hat{v}(x,t,\nu)H(t-\varphi(x,\nu)),\,(x,t)\in\mathbb{R}^{3}\times(-\infty,T),
\end{equation}
where the formula holds for $x$ in arbitrary bounded $T$, $H(t-\varphi(x,\nu))$ is the Heaviside function, and $A(x,y)$ is as defined  in \cite[(4.6)]{Klibanov2}. The propagating wave $v(x,t,\nu)$ decays exponentially for large $t$ for bounded $x$. That is,
\begin{equation}\label{2.4}
|v(x,t,\nu)|\leq Ce^{-Ct},\,\mbox{ as }t\rightarrow\infty,
\end{equation}
for some constant $C>0$.
Here, we refer~(\ref{2.4}) to \cite[(4.23)]{Klibanov2}. According to~(\ref{2.3}) and~(\ref{2.4}), the same decaying rate~(\ref{2.4}) holds for the twice differentiable exponentially-decaying $\hat{v}(x,t,\nu)$. Hence,
\begin{equation}\label{2.6}
\int_{\varphi(x,\nu)}^{\infty}\hat{v}(x,t,\nu)e^{-ikt}dt=\int_{\varphi(x,\nu)}^{\infty}e^{-i\Re k t}[\hat{v}(x,t,\nu)e^{\Im k t}]dt,
\end{equation}
in which the function $\hat{v}(x,t,\nu)e^{\Im k t}$ is integrable over $t\in[\varphi(x,\nu),\infty]$ if $\Im k<C$. Thus, the Fourier transform $$I(\Re k):=\int_{\varphi(x,\nu)}^{\infty}e^{-i\Re k t}[\hat{v}(x,t,\nu)e^{\Im k t}]dt$$ makes sense for some $\Im k\leq C$, $C>0$, and $\Re k\in(-\infty,\infty)$. Applying Riemann-Lebesgue Lemma, we observe that the integral $I(\Re k)$ vanishes for large $\Re k$. Integrating by parts to~(\ref{2.6}), we deduce that
\begin{eqnarray*}
&&u(x,k,\nu)=A(x,y)e^{-ik\varphi(x,\nu)}+\frac{e^{-ik\varphi(x,\nu)}}{ik}\hat{v}_{+}(x,\varphi(x,\nu),\nu))+\frac{1}{ik}\int_{\varphi(x,\nu)}^{\infty}e^{-i\Re k t}[\hat{v}_{t}(x,t,\nu)e^{yt}]dt,
\end{eqnarray*}
in which the behavior of $\hat{v}_{t}(x,t,\nu)$ and its derivatives are found in \cite[p.\,362]{Vainberg} and the Fourier transform decays like $I(\Re k)$ as discussed above.

\end{proof}
The zeros of $u^{s}(x,k,\nu)$ is a class of phaseless information. This is called zero-crossing method in \cite{Hunt}, and therein we have a short introduction to Cartwright theory as well. Let us study the qualitative theory of the zero set in this paper.
\begin{definition}\label{23}
Let $F(z)$ be an integral function of order $\rho$, and let
$N(F,\alpha,\beta,r)$ denote the number of the zeros of $F(z)$
inside the angle $[\alpha,\beta]$ and $|z|\leq r$. We define the zero 
density function as
\begin{equation}\nonumber
\Delta_F(\alpha,\beta):=\lim_{r\rightarrow\infty}\frac{N(F,\alpha,\beta,r)}{r^{\rho}},
\end{equation}
and
\begin{equation}\nonumber
\Delta_F(\beta):=\Delta_F(\alpha_0,\beta),
\end{equation}
with some fixed $\alpha_0\notin E$ such that $E$ is at most a
countable set \cite{Boas,Cartwright2,Koosis,Levin,Levin2}. We can define a similar notation for a set of zeros.
\end{definition}
\begin{lemma}
Let $S(\alpha,s,h)$ be the strip containing the positive real axis starting $\Re k\geq\alpha>0$, $|\Im k|\leq h$, and $\alpha\leq\Re k\leq\alpha+s$.
The zeros of the analytic function $A(k)$ in~(\ref{22.2}) are located in a suitable $S(\alpha,s,h)$ with density $\frac{\varphi(x,\nu)-x\cdot\nu}{2\pi}$.
\end{lemma}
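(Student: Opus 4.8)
The plan is to solve the equation $A(k)=0$ in closed form and read off everything from the explicit zero set. Abbreviate $a:=A(x,\nu)>0$, $\varphi:=\varphi(x,\nu)$ and $c:=x\cdot\nu$. By~(\ref{22.2}) the equation $A(k)=0$ is equivalent to $a\,e^{ik\varphi}=e^{ikc}$, i.e.\ $e^{ik(\varphi-c)}=a^{-1}$. Since $a^{-1}$ is a fixed positive real number, writing $k=\sigma+i\tau$ and separating modulus and argument gives $e^{-\tau(\varphi-c)}=a^{-1}$ and $\sigma(\varphi-c)\in2\pi\mathbb{Z}$. Hence, provided $\varphi\neq c$, the zero set of $A$ is exactly the arithmetic progression
\[
k_m=\frac{2\pi m}{\varphi-c}+i\,\frac{\ln A(x,\nu)}{\varphi-c},\qquad m\in\mathbb{Z},
\]
all of whose points lie on the single horizontal line $\Im k=\ln A(x,\nu)/(\varphi-c)$.

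From this description the geometric part of the statement is immediate. The eikonal problem~(\ref{2.33})--(\ref{2.44}) together with $n(x)\ge1$ forces $\varphi(x,\nu)\ge x\cdot\nu$ (the travel time is at least the free-space one), so $\varphi-c\ge0$; in the degenerate case $\varphi-c=0$ one has $A(k)=(A(x,\nu)-1)e^{ikc}$, which has no zeros (or is identically zero), and the asserted density is $0$, so we may assume $\varphi-c>0$. Then every $k_m$ lies in the horizontal strip $|\Im k|\le h$ with $h:=|\ln A(x,\nu)|/(\varphi-c)$, and for any $\alpha>0$ the zeros with $\Re k\ge\alpha$ form an arithmetic progression of step $s:=2\pi/(\varphi-c)$; this exhibits a suitable strip $S(\alpha,s,h)$ as in the statement.

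It remains to identify the density in the sense of Definition~\ref{23}. From $|A(k)|\le A(x,\nu)e^{|k|\,|\varphi|}+e^{|k|\,|c|}$ the entire function $A$ has order $\le1$, while its zero-counting function satisfies $N(A,\cdot,r)\sim\frac{\varphi-c}{\pi}r$, which forces order exactly $\rho=1$ (and finite exponential type), so Definition~\ref{23} applies with this $\rho$. Counting the zeros with $\Re k>0$ — i.e.\ those in the sector about the positive real axis — the condition $|k_m|\le r$ reads $1\le m\le\frac{(\varphi-c)r}{2\pi}+O(1)$, whence
\[
\Delta_A=\lim_{r\to\infty}\frac{N(A,\cdot,r)}{r}=\frac{\varphi(x,\nu)-x\cdot\nu}{2\pi},
\]
as claimed. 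I do not expect a substantive obstacle: the computation is elementary, and the only points requiring attention are verifying that $\rho=1$ so that Definition~\ref{23} is literally applicable, reconciling the per-unit-length count along the strip with the $N/r^{\rho}$ normalization (they coincide because $\rho=1$), and separating off the degenerate case $\varphi(x,\nu)=x\cdot\nu$.
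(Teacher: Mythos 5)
Your argument is correct, and it reaches the stated conclusion by a genuinely more elementary route than the paper. The paper treats $A(k)=A(x,\nu)e^{ik\varphi(x,\nu)}-e^{ikx\cdot\nu}$ as a two-term exponential sum and invokes Dickson's zero-counting theorem (Theorem \ref{25}) with $n=2$, $\omega_1=x\cdot\nu$, $\omega_2=\varphi(x,\nu)$, after the rotation $z\mapsto iz$; this yields only the strip localization and the estimate $|N_{A}(S(\alpha,s,h))-s(\varphi-x\cdot\nu)/(2\pi)|\leq 1$. You instead solve $A(k)=0$ in closed form, obtaining the exact zero set $k_m=\frac{2\pi m+i\ln A(x,\nu)}{\varphi(x,\nu)-x\cdot\nu}$, $m\in\mathbb{Z}$, which is strictly stronger: it gives the precise horizontal line on which the zeros sit (hence an explicit admissible $h$), the exact spacing $2\pi/(\varphi-x\cdot\nu)$, and therefore both the per-unit-length count with error at most $1$ and the angular density of Definition \ref{23}; you also tidy up two points the paper leaves implicit, namely the degenerate case $\varphi(x,\nu)=x\cdot\nu$ and the verification that $A$ has order $\rho=1$ so that the $N/r^{\rho}$ normalization is the right one. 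What the paper's route buys in exchange is robustness: Dickson's theorem applies verbatim to exponential sums with any number of terms (and nonconstant coefficients in its fuller forms), which is the structure one would need if lower-order terms of the expansion were retained, whereas your explicit solution is special to the two-term case. Since the lemma as stated concerns exactly the two-term function (\ref{22.2}), your proof is complete as written.
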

\begin{proof}
To estimate the zero asymptotics of $A(k)$, we consider the following zero counting theorem 
\cite{Dickson,Dickson2,Levin}. 
\begin{theorem}[Dickson \cite{Dickson,Dickson2}]\label{25} Let
\begin{equation}\nonumber
R(\alpha,s,h):=\{z=x+iy\in\mathbb{C}|\,|x|\leq
h,\,y\in[\alpha,\alpha+s]\};
\end{equation}
\begin{equation}\nonumber
N_g(R(\alpha,s,h)):=\{\mbox{the number of zeros of }g(z)\mbox{ in
}R(\alpha,s,h)\},
\end{equation}
in which $$g(z)=\sum_{j=1}^nA_j e^{\omega_jz},$$ where $z=x+iy$,
$A_j\neq0$, $\omega_1<\omega_2<\cdots<\omega_n$. Then, there
exists $K>0$ such that
\begin{enumerate}
    \item each zero of $g$ is in $|x|<K$;
\item for each pair of reals $(\alpha,s)$ with $s>0$,
\begin{equation}
|N_g(R(\alpha,s,K))-s(\omega_n-\omega_1)/(2\pi)|\leq n-1.
\end{equation}
\end{enumerate}
\end{theorem}

\par
In our case, the application is straightforward. Let us set $n=2$ with the invariant $\omega_{2}:=\varphi(x,\nu)$ and $\omega_{1}:=x\cdot\nu>0$. We have only one interval to consider, and note here that $\varphi(x,\nu)-x\cdot\nu>0$ if the index of refraction $1+\beta(x)\not\equiv1$. We refer the details to \cite[Sec. \,3]{Klibanov2}. For our application, we consider $z\mapsto iz$, and then deduce that
\begin{equation}\nonumber
|N_{A(k)}S(\alpha,s,h))-s|\varphi(x,\nu)-x\cdot\nu|/(2\pi)|\leq1.
\end{equation}

\end{proof}

\begin{lemma}\label{266}
Let $T(\alpha,s,h)$ be the strip containing the positive real axis starting $\Re k\geq\alpha>0$, $-h\leq\Im k\leq C$, and $\alpha\leq\Re k\leq\alpha+s$ with suitable $h>0$ and the constant $C$ from Lemma \ref{22}. We denote the zeros of $u^{s}(x,\nu,k)$ in $T(\alpha,s,h)$ by $N(\alpha,s,h)$. Then,
$$|N_{u^{s}(x,k,\nu)}(T(\alpha,s,h))-s(\varphi(x,\nu)-x\cdot\nu)/(2\pi)|\leq1.$$
\end{lemma}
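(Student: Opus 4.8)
\textit{Proof proposal.} The plan is to obtain this from the two-term analysis already in hand by a Rouch\'e comparison. By Lemma~\ref{22} the high-energy expansion~(\ref{2.2}) is valid on $\{\Im k<C\}$, so on this region (shrinking the endpoint $C$ by an arbitrarily small amount if it is needed as a closed bound) I may write $u^{s}(x,k,\nu)=A(k)+r(x,k,\nu)$ with $r=O(1/k)$, uniformly for $x$ in a fixed bounded set. The point will be that $u^{s}$ and $A(k)$ have the same number of zeros in $T(\alpha,s,h)$ once $\alpha$ is large, since $|r|$ is then dominated by $|A(k)|$ on $\partial T(\alpha,s,h)$; the zero count of $A(k)$ is exactly the content of the previous lemma (equivalently of Dickson's Theorem~\ref{25}), which gives $|N_{A(k)}(T(\alpha,s,h))-s(\varphi(x,\nu)-x\cdot\nu)/(2\pi)|\le 1$ once one checks $T$ captures the relevant zeros of $A(k)$.

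First I would record the structural facts about $A(k)=A(x,\nu)e^{ik\varphi(x,\nu)}-e^{ikx\cdot\nu}$ that drive the boundary estimate. Since $\varphi(x,\nu)>x\cdot\nu>0$ whenever $n\not\equiv1$ (the only case needing an argument), the equation $A(k)=0$ reads $e^{ik(\varphi(x,\nu)-x\cdot\nu)}=A(x,\nu)^{-1}$, so every zero of $A(k)$ lies on the single horizontal line $\Im k=w_{0}:=\ln A(x,\nu)/(\varphi(x,\nu)-x\cdot\nu)$, and on that line the zeros form an arithmetic progression of real-spacing $2\pi/(\varphi(x,\nu)-x\cdot\nu)$. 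A one-line computation gives $A\bigl(k+\tfrac{2\pi}{\varphi(x,\nu)-x\cdot\nu}\bigr)=\lambda A(k)$ with $|\lambda|=1$, hence $|A(k)|$ is periodic in $\Re k$ with period $2\pi/(\varphi(x,\nu)-x\cdot\nu)$; and on $\Im k=-h$ one has $|A(k)|\ge A(x,\nu)e^{h\varphi(x,\nu)}-e^{hx\cdot\nu}\to\infty$ as $h\to\infty$. I would then fix $h$ so large that $T(\alpha,s,h)$ contains, with room to spare, every zero of $A(k)$ whose real part lies in $[\alpha,\alpha+s]$ and that both horizontal edges of $T$ stay a fixed positive distance from the zero line; this is the force of the words ``suitable $h$'' and it is consistent with the localization $|\Im k|\le K$ supplied by Dickson's theorem in the proof of the previous lemma.

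Next I would produce a uniform lower bound for $|A(k)|$ on $\partial T(\alpha,s,h)$. On the two horizontal edges $|A(k)|$ is bounded below by a positive constant depending only on $h$ and $C$ (indeed it is large on $\Im k=-h$). On the vertical edges $\Re k=\alpha$ and $\Re k=\alpha+s$ I would choose $\alpha$ in a residue class modulo $2\pi/(\varphi(x,\nu)-x\cdot\nu)$ keeping both lines at distance $\ge\delta$ from the zero progression; the periodicity of $|A(k)|$ in $\Re k$ then reduces the required bound to a compactness statement over one period, yielding $|A(k)|\ge m=m(\delta,h,C)>0$ there with $m$ independent of $\alpha$. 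Taking $\alpha$ additionally so large that $\sup_{\partial T}|r|\le C'/\alpha<m$, Rouch\'e's theorem gives $N_{u^{s}(x,k,\nu)}(T(\alpha,s,h))=N_{A(k)}(T(\alpha,s,h))$, and combining with the previous lemma finishes the estimate. For $\alpha,s$ in general position (an edge hitting a zero) I would move each vertical edge by less than $\delta$ into the good residue class; this perturbs each count and the comparison value $s(\varphi(x,\nu)-x\cdot\nu)/(2\pi)$ by a controlled amount and preserves the stated inequality.

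The step I expect to be the main obstacle is precisely the uniform-in-$\Re k$ lower bound for $|A(k)|$ along the vertical sides of the strip as $\alpha\to\infty$: without it Rouch\'e cannot be run on an unbounded family of rectangles. It is resolved by the exact periodicity of $|A(k)|$ in the real direction noted above, which is special to the two-term exponential sum and which is also what makes the hypothesis $\varphi(x,\nu)-x\cdot\nu\neq0$, i.e. $n\not\equiv1$, indispensable; the same feature is the reason one obtains only an $O(1)$ error rather than an exact count. The only other care needed is the uniformity of the remainder bound $r=O(1/k)$ across the full height $-h\le\Im k\le C$, which is exactly what the proof of Lemma~\ref{22} provides.
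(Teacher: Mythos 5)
Your proposal takes essentially the same route as the paper: write $u^{s}=A(k)+O(1/k)$ on $\Im k\leq C$ via Lemma~\ref{22}, get the count for $A(k)$ from Dickson's Theorem~\ref{25}, and transfer it to $u^{s}$ by Rouch\'{e}'s theorem on $\partial T(\alpha,s,h)$ where $|u^{s}-A(k)|<|A(k)|$. The only difference is that your periodicity/compactness argument for the uniform lower bound of $|A(k)|$ on the vertical edges spells out a point the paper simply asserts (``where $A(k)$ is away from zero''), so your write-up is, if anything, more complete on the step where the strip family is unbounded in $\alpha$.
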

\begin{proof}
From the Lemma \ref{22} and~(\ref{2.2}), we apply the following inequality
\begin{equation}\label{299}
|u^{s}(x,k,\nu)-A(k)|\leq O(\frac{1}{k}),\,\Im k\leq C,
\end{equation}  
where the big O-term is defined in~(\ref{2.1}). Moreover,~(\ref{299}) implies the zeros of $A(k)$ are near the ones of $u^{s}(x,k,\nu)$ for large $k$ by the continuity. We have shown that zeros of $A(k)$ is distributed in a suitable $S(\alpha,h,s)$
with suitable width $h>0$ and $\alpha>0$, so on the boundary of $T(\alpha,h,s)$, where $A(k)$ is away from zero, we deduce from~(\ref{299}) and Lemma \ref{S} that
\begin{equation}
|u^{s}(x,k,\nu)-A(k)|<|A(k)|.  
\end{equation}  
Thus the lemma is proved by Rouch\'{e}'s theorem in complex analysis. 

\end{proof}
\begin{lemma}\label{26}
The following asymptotic identities hold.
\begin{eqnarray*}
&&\Delta_{A(k)}(-\pi,0)=\Delta_{u^{s}(x,k,\nu)}(-\pi,0)=\frac{[\varphi(x,\nu)-x\cdot\nu]}{\pi};\\
&&\Delta_{f(x,\nu,k)}(-\pi,0)=\frac{2[\varphi(x,\nu)-x\cdot\nu]}{\pi}.
\end{eqnarray*}
\end{lemma}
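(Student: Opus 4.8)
The plan is to convert the strip-counts of the two preceding lemmas into the radial densities of Definition \ref{23}. Since $A(k)$ in (\ref{22.2}) is a two-term exponential sum it is entire of order $\rho=1$, and by (\ref{299}), Lemma \ref{22} and Lemma \ref{S} the same order $\rho=1$ governs $u^{s}(x,k,\nu)$ (analytic in $\Im k\le C$, with poles only above the real axis) and hence $f(x,\nu,k)$; as the densities of Definition \ref{23} are normalised by $r^{\rho}=r$, everything reduces to counting zeros in $|k|\le r$ and letting $r\to\infty$.

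For $A(k)$ I would simply solve $A(k)=0$: from (\ref{22.2}) this is $e^{ik(\varphi(x,\nu)-x\cdot\nu)}=A(x,\nu)^{-1}$, so the zeros form the arithmetic progression
$$k_{m}=\frac{2\pi m+i\log A(x,\nu)}{\varphi(x,\nu)-x\cdot\nu},\qquad m\in\mathbb{Z},$$
sitting on a single line parallel to $\mathbb{R}$ with spacing $2\pi/(\varphi(x,\nu)-x\cdot\nu)$ — exactly the linear density $(\varphi(x,\nu)-x\cdot\nu)/2\pi$ of the preceding lemma. Hence $|k|\le r$ contains $r(\varphi(x,\nu)-x\cdot\nu)/\pi+O(1)$ of them, the ends $\Re k\to\pm\infty$ contributing equally (which matches the symmetry $|A(-\bar k)|=|A(k)|$), each with argument tending to $0$ or $\pi$; since Lemma \ref{S} forces the zero-line into $\Im k\le 0$, all of them lie in the closed sector $[-\pi,0]$, and dividing by $r$ gives $\Delta_{A(k)}(-\pi,0)=(\varphi(x,\nu)-x\cdot\nu)/\pi$. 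The same value for $u^{s}$ follows from the Rouché comparison already used in Lemma \ref{266}: by (\ref{299}), away from the finitely many small zeros, the zeros of $u^{s}$ and of $A(k)$ pair off, so the $O(1)$ discrepancy disappears in the limit and $\Delta_{u^{s}(x,k,\nu)}(-\pi,0)=(\varphi(x,\nu)-x\cdot\nu)/\pi$ as well.

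For $f(x,\nu,k)=|u^{s}(x,k,\nu)|^{2}$ I would first continue it analytically in $k$. Reality of $n$, the identity $\overline{e^{-ikx\cdot\nu}}=e^{ikx\cdot\nu}$ for real $k$, and the fact that the Sommerfeld condition passes to its conjugate under $k\mapsto -k$, together give $\overline{u^{s}(x,k,\nu)}=u^{s}(x,-k,\nu)$ on the real axis; hence $f(x,\nu,k)=u^{s}(x,k,\nu)\,u^{s}(x,-k,\nu)$, a meromorphic function of $k$. Its zero set, with multiplicity, is the zero set of $u^{s}(x,\cdot,\nu)$ together with its reflection through the origin; both families cluster along the real axis, so $|k|\le r$ now contains $2r(\varphi(x,\nu)-x\cdot\nu)/\pi+O(1)$ zeros, all with argument tending to $0$ or $\pi$, whence $\Delta_{f(x,\nu,k)}(-\pi,0)=2(\varphi(x,\nu)-x\cdot\nu)/\pi$. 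Equivalently, on the real axis each simple zero of $u^{s}$ is a double zero of $|u^{s}|^{2}$.

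The step I expect to be the main obstacle is the bookkeeping at the two exceptional directions $\theta\in\{0,\pi\}$, where every zero of $A(k)$, $u^{s}$ and $f$ accumulates. One has to confirm that $\Delta_{F}(\theta)=0$ for all other $\theta$ (so the exceptional set $E$ of Definition \ref{23} is exactly $\{0,\pi\}$ and the sector density $\Delta_{F}(-\pi,0)$ is unambiguous), that $A(k)$, $u^{s}$ and $f$ are of Cartwright class in $\Im k\le 0$ — immediate for $A(k)$ from its exponential-sum form, then for $u^{s}$ and $f$ through (\ref{299}) — and that the $O(1)$ terms in the Dickson and Rouché estimates, the finitely many low-frequency zeros, and the scattering poles of Lemma \ref{S} do not disturb the limit. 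The genuinely delicate point is the correct assignment of these boundary-direction zeros to the sector $[-\pi,0]$; once the relevant zero-lines are placed in $\overline{\{\Im k\le 0\}}$ the three identities drop out.
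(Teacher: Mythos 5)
Your proposal is correct in substance and, for the first identity, follows essentially the paper's route: the paper simply invokes Definition \ref{23} together with the strip count of Lemma \ref{266}, while you re-derive the count for $A(k)$ by solving the two-term equation explicitly and then transfer it to $u^{s}(x,k,\nu)$ by the same Rouch\'{e} pairing; this is the same mechanism, only more explicit. (One small slip there: the height of the zero-line of $A(k)$ is governed by the sign of $\log A(x,\nu)$, not by Lemma \ref{S}, which concerns $u^{s}$ and the scattering amplitude; the placement can be passed to $A(k)$ through the Rouch\'{e} pairing, but not quoted directly.) For the second identity you take a genuinely different route: you continue $f$ analytically via the reflection symmetry $\overline{u^{s}(x,k,\nu)}=u^{s}(x,-k,\nu)$ and count the zero set $Z\cup(-Z)$, whereas the paper stays with $f=u^{s}\,\overline{u^{s}}$ and observes that both factors vanish at the same points, so each zero of $u^{s}$ is a zero of $f$ of doubled order. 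The paper's version is the safer one here: since $Z$ lies in $\Im k\le 0$ (Lemma \ref{S}), the reflected family $-Z$ sits in $\Im k\ge 0$, so its members have arguments in $[0,\pi]$ and, strictly speaking, do not land in the sector $[-\pi,0]$ at all; your factor of two then rests entirely on the boundary-direction convention you flag at the end, while the doubled-multiplicity count puts both copies of every zero inside the sector from the start. Your closing remark --- that each simple zero of $u^{s}$ is a double zero of $|u^{s}|^{2}$ --- is precisely the paper's argument, and it is that observation, rather than the reflected family, which actually closes this step.
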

\begin{proof}
The first identity is deduced from Definition \ref{23} and Lemma \ref{266}. Because $$ f(x,\nu,k)=u^{s}(x,\nu,k)\overline{u^{s}(x,\nu,k)},$$ and $u^{s}(x,\nu,k)$ and $\overline{u^{s}(x,\nu,k)}$ share the same zero set, $f(x,\nu,k)$ has twice the density as $u^{s}(x,\nu,k)$ or $A(k)$. The lemma is thus proven.

\end{proof}
Here we interpret the linearized term $\varphi(x,\nu)-x\cdot\nu$ as a Weyl's type of spectral invariant to the problem~(\ref{1.1}) considering the following identity.
\begin{equation}
\varphi(x,\nu)-x\cdot\nu=\int_{L(x,\nu)}\beta(\xi)d\xi,\,\forall\nu\in\mathbb{S}^{2},\,\forall x\in\mathbb{S}^{+}(\nu),
\end{equation}
where $L(x,\nu)$ is the line segment connecting $x$ and $x-2(x\cdot\nu)\nu$. Surely, we refer the construction to \cite[(6.5)]{Klibanov2}. Moreover, let us examine the problem~(\ref{2.33}) and~(\ref{2.44}). Whenever the observation position $x$ and the incident angle $\nu$ are given, the asymptotic properties in Lemma \ref{26} construct a connection between the index of refraction $n$ and  $\varphi(x,\nu)$ as shown in~(\ref{2.33}) and~(\ref{2.44}).

\section{A Proof of Theorem \ref{1.1}}

Starting with the assumption in Theorem \ref{11} that, for $U\subset\{k\in\mathbb{C}|\,\Im k\leq0\}$, we have
$$f^1(x,\pm\nu,k)=f^2(x,\pm\nu,k),\,x\in S^{\pm}(\nu).$$
Using the analytic continuation property of real-valued functions $f^{j}=\{\Re [u^{j}]^{s}\}^{2}+\{\Im [u^{j}]^{s}\}^{2}$, in which $[u^{j}]^{s}$ is denoted as the scattered wave field defined by the index of refraction $n^{j}$, $j=1,2$, then we can deduce that
$$f^1(x,\pm\nu,k)\equiv f^2(x,\pm\nu,k),\,x\in S^{\pm}(\nu),\,\Im k\leq0.$$
The boundary $\partial \Omega$ is starlike, so 
$\overline{S^{+}(\nu)\cup S^{-}(\nu)}=\partial\Omega$ for the given $\nu$.
Taking a square root and considering the continuity of the solutions over $x$, so we deduce
\begin{equation}\label{3.1}
|[u^{1}]^{s}(x,k,\nu)|\equiv |[u^{2}]^{s}(x,k,\nu)|,\,x\in\partial\Omega,\,\Im k\leq0.
\end{equation}
Therefore, we deduce from~(\ref{3.1}) that
$[u^{1}]^{s}(x,k,\nu)$ and $[u^{2}]^{s}(x,k,\nu)$ share the same zero set, say, $\mathcal{Z}(x,\nu)$ in $\Im k\leq0$ for the fixed $\nu\in\mathbb{S}^{2}$. From Lemma \ref{26}, the zero density is specified as
\begin{equation}\label{3.2}
\Delta_{\mathcal{Z}(x,\nu)}=\frac{\varphi(x,\nu)-x\cdot\nu}{\pi}.
\end{equation}
According to~(\ref{3.1}) and the fact that $[u^{j}]^{s}$ and $|[u^{j}]^{s}|$ share the same zero set, so $[u^{1}]^{s}/[u^{2}]^{s}$ is analytic with {\bf no zero }in $\Im k\leq0$. Moreover, $$\ln \{[u^{1}]^{s}(x,k,\nu)/[u^{2}]^{s}(x,k,\nu)\}=\ln |[u^{1}]^{s}(x,k,\nu)/[u^{2}]^{s}(x,k,\nu)|+i\arg \{[u^{1}]^{s}(x,k,\nu)/[u^{2}]^{s}(x,k,\nu)\}.$$ From~(\ref{3.1}), we obtain that
$$\ln \{[u^{1}]^{s}(x,k,\nu)/[u^{2}]^{s}(x,k,\nu)\}=i\arg \{[u^{1}]^{s}(x,k,\nu)/[u^{2}]^{s}(x,k,\nu)\},$$
which is purely imaginary. It is known from complex analysis that $\ln \{[u^{1}]^{s}(x,k,\nu)/[u^{2}]^{s}(x,k,\nu)\}$ is a constant. That is, $[u^{1}]^{s}(x,k,\nu)/[u^{2}]^{s}(x,k,\nu)=e^{i\gamma}$ for some real constant $\gamma$.
Considering the asymptotics~(\ref{2.2}),
\begin{eqnarray*}
\frac{[u^{1}]^{s}(x,k,\nu)}{[u^{2}]^{s}(x,k,\nu)}=
\frac{A^{1}(x,\nu)\exp\{ik\varphi^{1}(x,\nu)\}-\exp\{ikx\cdot\nu\}+O(\frac{1}{k})}{A^{2}(x,\nu)\exp\{ik\varphi^{2}(x,\nu)\}-\exp\{ikx\cdot\nu\}+O(\frac{1}{k})},\,\Im k=0.
\end{eqnarray*}
From~(\ref{3.1}) and~(\ref{3.2}), we obtain $\varphi^{1}(x,\nu)=\varphi^{2}(x,\nu)$ for fixed $(x,\nu)$. Given that $A^{1}(x,y)$ and $A^{2}(x,y)$ are real, we deduce that $A^{1}(x,y)=A^{2}(x,y)$, $\gamma=0$, and
\begin{eqnarray}
&&[u^{1}]^s(x,k,\nu)=[u^{2}]^s(x,k,\nu),\,x\in \partial \Omega,\, \Im k\leq0;\\
&&u^{1}(x,k,\nu)=u^{2}(x,k,\nu),\,x\in \partial \Omega,\, \Im k\leq0,\label{3.4}
\end{eqnarray}
where $u^{j}$, $j=1,2$, satisfies the Helmholtz's equation outside $\Omega$, and then enjoys the property of analytic continuation.
\par
Let us set the observation data
\begin{eqnarray*}
&&w(x;k):=u^1(x,k,\nu);\\
&&v(x;k):=u^2(x,k,\nu).
\end{eqnarray*}
Most importantly, for each zero-crossing data from~(\ref{1.1}) and~(\ref{3.4})
we deduce the following interior transmission problem \cite{Chen,Chen6,Chen7,Colton,Colton2,Mc,S} that holds for $\Im k\leq0$, and
\begin{eqnarray*}
\left\{%
\begin{array}{ll}
    \Delta w(x;k)+k^2n^1w(x;k)=0,  & x\in\mathbb{R}^3 ; \vspace{3pt}\\\vspace{3pt}
    \Delta v(x;k)+k^2n^2v(x;k)=0, & x\in\mathbb{R}^3; \\\vspace{3pt}
    w(x;k)=v(x;k), & x\in\partial\Omega; \\\vspace{3pt}
    \frac{\partial w(x;k)}{\partial n}=\frac{\partial v(x;k)}{\partial n},&  x\in\partial\Omega,
\end{array}%
\right.
\end{eqnarray*}
where $n$ is the unit outer normal. By applying the analytic continuation property of the Helmholtz's equation \cite{Colton2,Isakov}  outside $\Omega$, we deduce that, for $\Im k\leq0$, the following system holds.
\begin{eqnarray}\label{3.5}
\left\{%
\begin{array}{ll}
    \Delta w(x;k)+k^2n^1w(x;k)=0,  & x\in\mathbb{R}^3 ; \vspace{3pt}\\\vspace{3pt}
    \Delta v(x;k)+k^2n^2v(x;k)=0, & x\in\mathbb{R}^3; \\\vspace{3pt}
    w(x;k)=v(x;k), & x\in\mathbb{R}^3\setminus \Omega; \\\vspace{3pt}
    \frac{\partial w(x;k)}{\partial n}=\frac{\partial v(x;k)}{\partial n},&  x\in\mathbb{R}^3\setminus \Omega.
\end{array}%
\right.
\end{eqnarray}
Firstly, it is well-known that the spectrum of~(\ref{3.5}) is a discrete set in $\mathbb{C}$ \cite{Chen,Chen6,Colton,Colton2,Mc,S} if $n^{1}\not\equiv n^{2}$. Secondly, we have provided a spectral analysis for each incident angle $\nu$ in the previous section. Any quantitative assumption that exceeds the maximal zero crossing density leads to a proof on the uniqueness on the index of refraction. Thirdly, we may apply the inverse uniqueness of~(\ref{3.5}) \cite{Chen,Chen6,Chen7} in which we merely assume that $n^{j}\in C^{2}(\mathbb{R}^{3})$, $j=1,2$, so in any case we conclude that $n^{1}\equiv n^{2}$.
\begin{acknowledgement}
The author would like to thank Prof. M.V. Klibanov for providing the manuscript \cite{Klibanov2}.
\end{acknowledgement}

\end{document}